 \newtheorem{theorem}{Theorem}[section]
 \newtheorem{corollary}[theorem]{Corollary}
 \newtheorem{proposition}[theorem]{Proposition}
 \theoremstyle{definition}
 \newtheorem{definition}[theorem]{Definition}
 \theoremstyle{remark}
 \newtheorem*{example}{Example}
 \numberwithin{equation}{section}
\begin{document}

%
%
%
%
%
%
%
%
%

\title[General factorial ]
 {Generalized factorials characterized by\br Dirichlet convolution}

\author[Wanli Ma]{Wanli Ma}

\address{
School of Mathematical Sciences\br
East China Normal University\br 
500 Dongchuan Road (200241)\br
Shanghai, PR China
}
\email{mawanli271828314159@gmail.com}

\thanks{This work was completed with the support of Prof. Zhiguo Liu}
\subjclass{Primary 11B65; Secondary 11Z05.}

\keywords{factorial, arithmetic function, Dirichlet convolution, superadditive sequence.}

\date{December 24, 2024}

\begin{abstract}
We extend A.B. Mingarelli's method for constructing generalized factorials.  Our extension uses a pair of arithmetic functions $(x, y)$, where $x$ is superadditive.  When $x$ is the identity function, our generalized factorial reduces to Mingarelli's. A result on the irrationality of the Euler constant within this framework is given. Using Dirichlet convolution, we characterize when two pairs $(\alpha, \beta)$ and $(x, y)$ generate the same factorials.  
\end{abstract}

\maketitle
\section{Introduction}
	
	The study of generalized factorials has attracted considerable interest over the past century. Introductions and reviews of this area are given in \cite{Bhargava1998general,Bhargava2000factorial,chabert2006oldproblems,knuth1989power}. Generalized factorials are usually defined in association with subsets of integers $E \subset \mathbb{Z}$ (in this paper, $E$ is a pair of arithmetic functions) \cite{Bhargava2000factorial,mingarelli2013abstract}. The $n$-th factorial associated with $E$ is usually denoted by $n!_E$.

	The following definition is adapted from Mingarelli \cite{mingarelli2013abstract}, who referred to these as "abstract factorials."
	\begin{definition}
		A \textbf{generalized factorial} is a function $!_E: \mathbb{N}\to \mathbb{Z}^+$ that satisfies the following three conditions:
		
			 $0!_E=1.$
			 
			 For all non-negative integers $n$ and $k$, where $0\leq k\leq n$, the generalized binomial coefficient is a positive integer:
			\[\binom{n}{k}_E:=\frac{n!_E}{k!_E(n-k)!_E}\in \mathbb{Z}^+.
			\]
			
			 For every positive integer $n$, $n!$ divides $n!_E$.
		
	\end{definition}
	Generalized factorials have several characterizations \cite{mingarelli2013abstract}, and the following one is easily verifiable:
	\begin{proposition}
		Let $!_E$ be a generalized factorial. Then there exists a sequence of positive integers $(h_n)_{n\geq 0}$ with $h_0=1$ such that for each $n\in \mathbb{N}$,
		\begin{equation}\label{factor of abstract factorial}
			\left. h_k h_{n-k} \middle| h_n\binom{n}{k}, \; k=0,1,2,\ldots,n.\right.
		\end{equation}
		Conversely, if there exists a sequence of positive integers $(h_n)_{n\geq 0}$ satisfying \eqref{factor of abstract factorial} and $h_0=1$, then the function $!_E:\mathbb{N}\to \mathbb{Z}$ defined by $n!_E:=n!h_n$ is a generalized factorial.  
	\end{proposition}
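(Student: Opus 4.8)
The plan is to use the single substitution $h_n := n!_E/n!$ to translate the defining conditions of a generalized factorial into the divisibility relation \eqref{factor of abstract factorial}, and back.

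For the forward direction I would argue as follows. Given a generalized factorial $!_E$, condition (3) guarantees that $n!$ divides $n!_E$, so $h_n := n!_E/n!$ is a well-defined positive integer for every $n$; moreover $h_0 = 0!_E/0! = 1$ by condition (1). The crucial step is to substitute $n!_E = n!\,h_n$ (and likewise for $k$ and $n-k$) into the generalized binomial coefficient, which yields the identity
\[
\binom{n}{k}_E=\frac{n!\,h_n}{(k!\,h_k)\,((n-k)!\,h_{n-k})}=\binom{n}{k}\frac{h_n}{h_k h_{n-k}}.
\]
Since condition (2) asserts that $\binom{n}{k}_E\in\mathbb{Z}^+$, the right-hand side is a positive integer, and this is exactly the assertion that $h_k h_{n-k}$ divides $h_n\binom{n}{k}$ for every $k=0,1,\ldots,n$. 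This establishes \eqref{factor of abstract factorial}.

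For the converse I would simply reverse this reasoning. Given positive integers $(h_n)_{n\ge0}$ with $h_0=1$ satisfying \eqref{factor of abstract factorial}, define $n!_E := n!\,h_n$ and verify the three defining conditions in turn. Condition (1) holds because $0!_E = 0!\,h_0 = 1$; condition (3) is immediate since $n! \mid n!\,h_n$ as $h_n$ is a positive integer; and condition (2) follows from the very same algebraic identity, because the hypothesis $h_k h_{n-k} \mid h_n\binom{n}{k}$ makes $\binom{n}{k}_E = \binom{n}{k}\,h_n/(h_k h_{n-k})$ a positive integer (it is manifestly positive, all factors being positive).

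I do not expect a genuine obstacle here: the entire content is carried by the displayed identity relating $\binom{n}{k}_E$ to the ordinary binomial coefficient. The only points requiring a word of care are the integrality and positivity of $h_n$ in the forward direction, which rely essentially on condition (3), and, in the converse, the observation that \eqref{factor of abstract factorial} upgrades the binomial coefficients from merely rational to integral.
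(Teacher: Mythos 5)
Your proof is correct and is exactly the argument the paper has in mind: the paper states this proposition without proof, calling it ``easily verifiable,'' and the verification it intends is precisely your substitution $h_n = n!_E/n!$ together with the identity $\binom{n}{k}_E = \binom{n}{k}\,h_n/(h_k h_{n-k})$. Both directions are handled correctly, including the points where conditions (1)--(3) are actually used.
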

	\begin{corollary}
		Let $(h_n)_{n\geq 0}$ be a sequence of integers such that $h_0=1$ and $h_kh_{n-k} \mid h_n$ for $0\leq k\leq n$ and each $n\in \mathbb{N}$. Then the sequence defined by $n!_E:=n!h_n$ is a generalized factorial.
	\end{corollary}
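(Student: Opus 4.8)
The plan is to deduce this directly from the preceding Proposition, since the Corollary is nothing but that Proposition with a more restrictive hypothesis. The Proposition already guarantees that $n!_E := n!h_n$ is a generalized factorial whenever $h_0 = 1$ and the divisibility $h_k h_{n-k} \mid h_n\binom{n}{k}$ holds for all $0 \le k \le n$ and all $n$. So the entire task reduces to verifying that the (stronger) hypothesis of the Corollary implies this (weaker) hypothesis of the Proposition, after which the conclusion is inherited with no further work.

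First I would record the elementary fact that divisibility is preserved under multiplication: if $a \mid b$, then $a \mid bc$ for every integer $c$. Applying this with $a = h_k h_{n-k}$, $b = h_n$, and $c = \binom{n}{k}$ — which is a positive integer for $0 \le k \le n$ — the assumed relation $h_k h_{n-k} \mid h_n$ immediately yields $h_k h_{n-k} \mid h_n\binom{n}{k}$ for each $k = 0,1,\ldots,n$. Since $h_0 = 1$ appears in both hypotheses, every condition required to invoke the Proposition is now in place, and the claim that $n!_E := n!h_n$ is a generalized factorial follows at once.

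The one point deserving care is positivity. The definition demands a function $!_E : \mathbb{N} \to \mathbb{Z}^+$, so each value $n!h_n$ must be a positive integer, which forces $h_n > 0$; the divisibility conditions alone do not guarantee this over $\mathbb{Z}$ (e.g.\ $h_0 = 1$, $h_1 = 1$, $h_2 = -1$ satisfies $h_k h_{n-k}\mid h_n$ yet gives $2!_E = -2$). I would therefore read ``integers'' here as ``positive integers,'' matching the hypothesis of the Proposition, or else add the explicit assumption $h_n \in \mathbb{Z}^+$; under either reading the reduction above applies verbatim. I do not anticipate any genuine obstacle, as the whole content is the trivial implication $h_k h_{n-k}\mid h_n \Rightarrow h_k h_{n-k}\mid h_n\binom{n}{k}$, the Corollary being simply the specialization of the Proposition obtained by discarding the binomial factor.
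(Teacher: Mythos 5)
Your proposal is correct and matches the paper's (implicit) argument exactly: the Corollary is the Proposition's converse direction applied after noting that $h_k h_{n-k} \mid h_n$ implies $h_k h_{n-k} \mid h_n\binom{n}{k}$ since $\binom{n}{k}$ is an integer. Your remark that ``integers'' should be read as ``positive integers'' to match the Proposition and the codomain $\mathbb{Z}^+$ is a fair and careful observation about the statement as printed.
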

	
	The Dirichlet convolution of two arithmetic functions $\alpha$ and $\beta$, denoted by $\alpha\star\beta$, is defined as:
	\[(\alpha \star \beta)(n):= \sum_{d|n} \alpha\left(\frac{n}{d}\right)\beta(d), \quad n\geq 1.
	\]
	Dirichlet convolution is associative and commutative. The convolution identity, denoted by $\epsilon$, is defined as:
	\[ \epsilon(n)=\delta_{1,n}, \quad n\geq 1.
	\]
	where $\delta_{1,n}$ is the Kronecker delta. For any arithmetic function $\alpha$, $\epsilon$ satisfies the formula $\alpha \star \epsilon=\epsilon \star \alpha=\alpha$.
	The Dirichlet inverse of an arithmetic function $\alpha$ is a function $\beta$ such that $\alpha \star \beta=\beta\star\alpha=\epsilon$. For the constant function $1(n)=1$, its Dirichlet inverse is the Möbius function $\mu$.
	The Von Mangoldt function, denoted by $\Lambda$, is defined as:
	\[\Lambda(n)=\begin{cases}
		\log(p) & \text{if } n=p^k \text{ for some prime }p \text{ and integer } k\geq 1,\\
		0 & \text{otherwise}.
	\end{cases}
	\]
	
	In the next section, we shall consider those generalized factorials induced by sequences $(h_n)$ such that $h_kh_{n-k} \mid h_n$. In the final section, we will use Dirichlet convolution to characterize these generalized factorials.
	
\section{Construction and Examples}

	In the following, $x=(x(n))_{n\geq1}$ and $y=(y(n))_{n\geq 1}$ will always represent sequences of positive integers, or equivalently, arithmetic functions. We shall use the pair $(x,y)$ to construct a class of generalized factorials.
\subsection{Construction of Generalized Factorials }
	
	Recall that a sequence of positive integers (or arithmetic function) $x=(x(n))_{n\geq1}$ is called \textit{superadditive} if
	\[x(m)+x(n)\leq x(m+n)\quad \text{for all } m,n \geq 1.
	\]
	
	For a general non-decreasing sequence $a=(a(n))_{n\geq 1}$ that is unbounded above, we can define a subsequence of $a$, denoted $\hat{a}=(\hat{a}(n))_{n\geq 1}$, that is superadditive and minimal. This means that for any other superadditive subsequence $b=(b(n))_{n\geq 1}$ of $a$, we have $\hat{a}$ bounded by $b$ term wise:
	\[\hat{a}(n)\leq b(n), \quad \text{for all } n\geq 1.
	\]
	
	The minimal superadditive subsequence of an unbounded sequence can be defined by a greedy algorithm, described as follows:
	\begin{definition}\label{defi of minsuper sequence}
			For a non-decreasing sequence $x=(x(n))_{n\geq 1}$ of non-negative integers that is unbounded, define $\hat{x}=(\hat{x}(n))_{n\geq 1}$ recursively as:
			
			$\hat{x}(1):=x(1)$, which is the minimal element in $x$.
			
			For $n>1$, if $\hat{x}(1),\hat{x}(2),\ldots, \hat{x}(n-1)$ have been defined, define
			\[\hat{x}(n):=\min_{a\in \text{Im}(x)}\{ a \mid \hat{x}(k)+\hat{x}(n-k)\leq a, \text{ for } k=1,2,\ldots,n-1\}.
			\]
	\end{definition} 
	Here, to facilitate our definition, we identify the function $x$ with the image of $x$. 
	It is straightforward to verify that the subsequence $\hat{x}$ defined above is indeed superadditive and minimal.
	\begin{example}
		For each positive integer $k$, function $x$ defined by $x(n)=n^k$ is superadditive itself. Hence, we have $\hat{x}=x$. This is because for all positive integers $m$ and $n$, $m^k+n^k\leq (m+n)^k.$
	\end{example}
	%
	%
	\begin{definition}\label{definition of array alpha(n,k)}
		Let $x=(x(n))_{n\geq 1}$ be an unbounded sequence of positive integers. Define an array of elements $x(n,k)_{n\geq 1, k\geq 1}$ recursively by the following minimal condition:
		\begin{align*}
			& x(1,1)=\min{x},\\
			& x(n,k)=0 \text{ whenever } n<k,\\
			& x(n,k)=\min_{a \in Im(x)} \{a \mid x(i,k)+x(n-i,k)\leq a, \text{ for } 1\leq i \leq n-1\}.
		\end{align*}
	\end{definition}
	\begin{example}
		Let $x=(1,2,3,4,5,\ldots)$ be the usual sequence of positive integers. Then:
		$x(1,1)=1$, since 1 is the minimal element in $x$.
		$x(2,1)$ is the minimal element in $x$ such that $x(1,1)+x(1,1)\leq x(2,1)$, hence $x(2,1)=2$.
		Since $x(1,2)=0$, we take $x(2,2)$ as the minimal element in $x$.
		Continuing this way, we have $x(3,1)=3$, $x(3,2)=1$, $x(3,3)=1$, and so on. 
	\end{example}
	Now we have all the necessary ingredients to define the abstract factorial based on two sequences $x$ and $y$:
	\begin{definition}\label{defi of factorial set Bn}
		Let $x=(x(n))_{n\geq 1}$ and $y=(y(n))_{n\geq 1}$ be two sequences of positive integers, where $x$ is unbounded above. Let $x(n,k)$ be the array associated with $x$ as defined in Definition \ref{definition of array alpha(n,k)}. 
		We define the \textbf{factorial set associated with $(x,y)$}, denoted by $B=(B_n)_{n\geq 0}$, as follows:
		
		$B_0:=1$,
		
		For $n\geq 1$,
		\[B_n:=\prod_{k=1}^{n}y(k)^{x(n,k)}=y(1)^{x(n,1)}y(2)^{x(n,2)}\cdots y(n)^{x(n,n)}.
		\]
	\end{definition}
	It is worth noting that by our construction, for any $m\in \mathbb{N}$, $B_n$ divides $B_{n+m}$. To see this:
	\[\frac{B_{n+m}}{B_n}=\prod_{i=1}^{n}y(i)^{x(n+m,i)-x(n,i)}\prod_{j=n+1}^{n+m}y(j)^{x(n+m,j)}.
	\]
	By the definition of $x(n,k)$, we have $x(n+m,i)-x(n,i)\geq 0$ for all $1\leq i\leq n$. 
	\begin{theorem}\label{the general definition of abstract factorial}
		Let $x=(x_n)_{n\geq 1}$ be an unbounded sequence of positive integers, and $y=(y_n)_{n\geq 1}$ be a sequence of positive integers. Let $(B_n)_{n\geq 0}$ be the factorial set associated with $(x,y)$ as defined in Definition \ref{defi of factorial set Bn}.
		Then, for each $m\geq 0$, the map $!_{x,y}:\mathbb{N}\to \mathbb{Z}^+$ defined by:
		\begin{align*}
			0!_{x,y} &:= 1,\\
			n!_{x,y} &:= n!B_1B_2\cdots B_{n+m}.
		\end{align*}
		is an abstract factorial.
	\end{theorem}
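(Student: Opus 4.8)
The plan is to exhibit the auxiliary sequence to which the Proposition (equivalently, the Corollary) applies and then reduce everything to the combinatorics of the columns of the array $x(n,k)$. Writing $h_n:=B_1B_2\cdots B_{n+m}$ for $n\ge 1$ and $h_0:=1$, the definition reads $n!_{x,y}=n!\,h_n$, so the third factorial axiom ($n!\mid n!_{x,y}$) and the normalization $0!_{x,y}=1$ hold for free. It therefore remains only to verify the binomial axiom, and for this I would check the hypothesis \eqref{factor of abstract factorial} of the Proposition, namely $h_kh_{n-k}\mid h_n\binom{n}{k}$ for $0\le k\le n$. The boundary cases $k=0$ and $k=n$ are immediate from $h_0=1$, so the whole content lies in the range $1\le k\le n-1$.

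The main simplification I would use is that every $h_N$ is a product of the \emph{same} fixed bases $y(1),y(2),\dots$ raised to nonnegative integer exponents: by Definition~\ref{defi of factorial set Bn}, $h_N=\prod_{\ell\ge 1}y(\ell)^{E_N(\ell)}$, where $E_N(\ell)=\sum_{j}x(j,\ell)$ is the partial sum of the $\ell$-th column of the array over the appropriate window of indices $j$. Since a quotient of two such products is a positive integer exactly when the exponents dominate base-by-base, I would reduce the required divisibility to a family of scalar inequalities, one for each base index $\ell$, comparing the relevant $E(\ell)$ on the two sides. In this way all of the $n,k$-dependence enters only through the single column sequence $j\mapsto x(j,\ell)$, which decouples the problem into one inequality per column.

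The key lemma is a monotonicity/superadditivity statement about columns: for each fixed $\ell$, the recursion in Definition~\ref{definition of array alpha(n,k)} forces $j\mapsto x(j,\ell)$ to be non-decreasing (its defining values are nonnegative) and superadditive, $x(a,\ell)+x(b,\ell)\le x(a+b,\ell)$. Granting this, each column inequality should follow from a length-preserving, monotone pairing of the summands in the two index windows: shifting the index by $n-k$ maps the shorter window injectively into the longer one while only increasing $x(\cdot,\ell)$, so the sum over the shorter window is dominated by the sum over the longer one. I expect the main obstacle to be the bookkeeping forced by the offset $m$ together with the special value $h_0=1$: the two windows must be aligned so that this pairing is genuinely a bijection onto a sub-window with larger entries, and any summands left unmatched because of the shift have to be absorbed — this is precisely where the positivity of the ordinary binomial coefficient $\binom{n}{k}$ appearing in \eqref{factor of abstract factorial} is available as slack. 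Once the column inequalities are established, multiplying them back over all bases $y(\ell)$ yields $h_kh_{n-k}\mid h_n\binom{n}{k}$, and the Proposition then gives that $!_{x,y}$ is an abstract factorial.
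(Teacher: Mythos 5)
Your reduction to \eqref{factor of abstract factorial} with $h_n=B_1\cdots B_{n+m}$, and your treatment of the case $m=0$, are correct and essentially coincide with the paper's argument: for $m=0$ the quotient $h_n\binom{n}{k}/(h_kh_{n-k})$ telescopes to $\binom{n}{k}\prod_{i=1}^{n-k}B_{k+i}/B_i$, and each factor $B_{k+i}/B_i$ is an integer because each column $j\mapsto x(j,\ell)$ is non-decreasing --- exactly your shift-by-$(n-k)$ pairing. Your key lemma is also true (by Theorem 2.10, $x(j,\ell)=\hat{x}(\lfloor j/\ell\rfloor)$ is non-decreasing and superadditive in $j$), although for $m=0$ monotonicity alone suffices. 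A small imprecision: a quotient of products of the $y(\ell)$'s is an integer \emph{if} the exponents dominate base-by-base, not ``exactly when'' (the $y(\ell)$ need not be multiplicatively independent); since you only use sufficiency, this is harmless.

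The genuine gap is the step where the $m$ unmatched denominator factors are ``absorbed'' by the slack in $\binom{n}{k}$. After cancellation one must show $B_1\cdots B_{n-k+m}\mid\binom{n}{k}\,B_{k+m+1}\cdots B_{n+m}$, where the left side has $m$ more factors than the right; the mere positivity of the integer $\binom{n}{k}$ provides no divisibility room, and there is no mechanism to distribute it among the bases $y(\ell)$. Indeed no argument can close this gap, because the statement is false for $m\ge 2$: take $x(n)=n$, $y(n)=e^{\Lambda(n)}$, so $B_j=j!$ by \eqref{ordinary factorial representation}, and $m=3$, $n=2$, $k=1$; then $h_1=B_1B_2B_3B_4=288$, $h_2=B_1\cdots B_5=34560$, and $h_1^2=82944$ does not divide $\binom{2}{1}h_2=69120$, i.e.\ $\binom{2}{1}_{x,y}=5/6$. (For $m=2$ take $y(1)=3$ and $y(j)=1$ otherwise, so $B_j=3^j$: then $h_1^2=3^{12}\nmid 2\cdot 3^{10}=\binom{2}{1}h_2$.) The paper's own proof hides the same defect behind an index miscount: the displayed identity $\binom{n}{r}_{x,y}=\binom{n}{r}\prod_{i=1}^{n-r+m}B_{r+m+i}/B_i$ is wrong for $m>0$, since $h_n/h_r=B_{r+m+1}\cdots B_{n+m}$ has only $n-r$ factors while $h_{n-r}$ has $n-r+m$. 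So your explicit flagging of the obstacle is the more honest account; the theorem holds for $m\in\{0,1\}$ (for $m=1$ use $B_1B_2\mid B_3\mid B_{k+2}$ to absorb the single surplus factor), or for all $m$ if one replaces the product by the shifted version $h_n=B_{m+1}B_{m+2}\cdots B_{m+n}$, in which case your pairing works verbatim.
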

	\begin{proof}
		The only part that needs some explanation is the integer nature of the generalized binomial coefficients. For a fixed $m\in \mathbb{N}$, we have:
		
		\[\binom{n}{r}_{x,y}:=\frac{n!_{x,y}}{r!_{x,y}(n-r)!_{x,y}}=\binom{n}{r}\prod_{ i=1}^{n-r+m}\frac{B_{r+m+i}}{B_{i}}.
		\]
		
		Here, $1\leq r \leq n-1$, and the other cases are trivial.
		Since we have shown earlier that $B_i$ divides $B_{r+m+i}$ for all $i$. This completes the proof.
	\end{proof}
	\begin{theorem}\label{the simpler definition of abstract factorial}
		Let $(B_n)_{n\geq 0}$ be the factorial set associated with the sequences $x$ and $y$ as defined in Definition \ref{defi of factorial set Bn}. If $n!$ divides $B_n$ for all $n\geq 0$, then $(B_n)_{n\geq 0}$ is a generalized factorial.
	\end{theorem}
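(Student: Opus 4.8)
The plan is to verify directly that the function $!_E \colon \mathbb{N}\to\mathbb{Z}^{+}$ defined by $n!_E := B_n$ meets the three requirements in the definition of a generalized factorial. Two of them come for free: $0!_E = B_0 = 1$ holds by Definition \ref{defi of factorial set Bn}, and $n! \mid n!_E = B_n$ is exactly the standing hypothesis. Hence everything reduces to showing that each generalized binomial coefficient
\[
\binom{n}{k}_E = \frac{B_n}{B_k\,B_{n-k}}
\]
is a positive integer for $0 \le k \le n$.

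I would handle the endpoints $k \in \{0,n\}$ first, where the coefficient collapses to $B_n/(B_0 B_n) = 1$. For $1 \le k \le n-1$ I would substitute the product formula for each $B$. Because $x(m,i) = 0$ whenever $i > m$, all three products may be written over the common index range $1 \le i \le n$, which yields the single clean expression
\[
\binom{n}{k}_E = \prod_{i=1}^{n} y(i)^{\,x(n,i)\,-\,x(k,i)\,-\,x(n-k,i)}.
\]
Since every $y(i)$ is a positive integer, it now suffices to prove that each exponent $e_i := x(n,i) - x(k,i) - x(n-k,i)$ is non-negative; then each factor $y(i)^{e_i}$, and hence the whole product, is a positive integer.

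The non-negativity of the $e_i$ is where the construction pays off, and it is essentially immediate. Fix the second index $i$ and read the recursion in Definition \ref{definition of array alpha(n,k)}: the entry $x(n,i)$ is chosen as the least element of $\mathrm{Im}(x)$ that dominates every sum $x(j,i) + x(n-j,i)$ with $1 \le j \le n-1$. Specializing to $j = k$ gives at once $x(n,i) \ge x(k,i) + x(n-k,i)$, that is $e_i \ge 0$. Equivalently, each column $\bigl(x(n,i)\bigr)_{n \ge 1}$ is superadditive in its first argument \emph{by design}. This is the same mechanism used in the remark preceding Theorem \ref{the general definition of abstract factorial} (there to show $B_n \mid B_{n+m}$), now applied to the splitting $n = k + (n-k)$.

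I do not anticipate a genuine obstacle: the decisive inequality $e_i \ge 0$ needs no case analysis, since the recursion's constraint holds for every admissible $j$ and the possibly-vanishing terms (where $i>k$ or $i>n-k$) only relax the bound. The sole point requiring attention is the routine bookkeeping involved in writing the three products over a common index range. Once non-negativity of all exponents is established, the three defining conditions are in place and the theorem follows. As a cross-check, $\binom{n}{k}_E = B_n/(B_k B_{n-k})$ coincides with $h_n\binom{n}{k}/(h_k h_{n-k})$ for $h_m := B_m/m!$, so the very same inequality also verifies \eqref{factor of abstract factorial}, giving an alternative conclusion through the Proposition of the Introduction.
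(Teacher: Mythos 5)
Your proof is correct and follows essentially the same route as the paper: both reduce the problem to showing that each exponent $x(n,i)-x(k,i)-x(n-k,i)$ is non-negative, which is immediate from the defining constraint of the array $x(n,i)$ applied at the split $n=k+(n-k)$. The only cosmetic difference is that you keep a single index range $1\le i\le n$ using $x(m,i)=0$ for $i>m$, whereas the paper splits the product into the three ranges $i\le k$, $k<i\le n-k$, and $n-k<i\le n$.
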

	\begin{proof}
		It will suffice to show that for $0\leq k \leq n$, $B_n/(B_kB_{n-k})$ is an integer.
		Without loss of generality, we can assume that $k\leq \lfloor n/2 \rfloor$. Then we have:
		\begin{align*}
			\frac{B_n}{B_kB_{n-k}} &= \frac{\prod_{i=1}^{n}y(i)^{x(n,i)}}{\prod_{i=1}^{k}y(i)^{x(k,i)}\prod_{i=1}^{n-k}y(i)^{x(n-k,i)}}\\
			&=  \prod_{i=1}^{k}y(i)^{x(n,i)-x(k,i)-x(n-k,i)} \prod_{i=k+1}^{n-k}y(i)^{x(n,i)-x(n-k,i)} \prod_{i=n-k+1}^{n}y(i)^{x(n,i)}.
		\end{align*}
		By the construction of $x(n,k)$, the power of each $y_i$ in the above expression is a non-negative integer.
	\end{proof}
	\begin{theorem}
		Let $x=(x(n))_{n\geq 1}$ be an unbounded sequence of positive integers, and $y=(y(n))_{n\geq 1}$ be a sequence of positive integers. The sequence $(B_n)_{n\geq 0}$ defined in Definition \ref{defi of factorial set Bn} has the following form:
		
		\begin{equation}
			 B_n=\prod_{k=1}^{n}y(k)^{\hat{x}(\lfloor n/k \rfloor)}.
		\end{equation}
		Where $\lfloor \cdot \rfloor$ denotes the floor function, and $(\hat{x}(n))_{n\geq 1}$ is the minimal superadditive subsequence of $x$ as defined in Definition \ref{defi of minsuper sequence}.
		In other words, the power of $y_k$ in the expression for $B_n$ is given by $x(n,k)=\hat{x}(\lfloor n/k \rfloor)$.
	\end{theorem}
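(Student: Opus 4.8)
The plan is to prove the sharper, entrywise identity $x(n,k)=\hat{x}(\lfloor n/k\rfloor)$ for all $n,k\geq 1$ by induction on $n$ (uniformly in $k$), adopting the convention $\hat{x}(0)=0$, which is forced since $x(n,k)=0$ and $\lfloor n/k\rfloor=0$ whenever $n<k$. I would first clear the base and boundary cases: for $n=1$ one has $x(1,1)=\min x=\hat{x}(1)$ and $x(1,k)=0=\hat{x}(0)$ for $k>1$; for $k>n$ both sides vanish; and for $k=n$ every term $x(i,n),x(n-i,n)$ with $1\leq i\leq n-1$ is zero, so the defining minimum returns $x(n,n)=\min x=\hat{x}(1)$. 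The content lies in the range $1\leq k<n$.

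Fix such a $k$ and put $m=\lfloor n/k\rfloor$. Since every index $i$ and $n-i$ appearing in the recursion of Definition \ref{definition of array alpha(n,k)} is smaller than $n$, the induction hypothesis rewrites that recursion as
\[
x(n,k)=\min\bigl\{\,a\in\mathrm{Im}(x)\;:\;a\geq \hat{x}(\lfloor i/k\rfloor)+\hat{x}(\lfloor (n-i)/k\rfloor)\ \text{for } 1\leq i\leq n-1\,\bigr\}.
\]
Writing $L:=\max_{1\leq i\leq n-1}\bigl[\hat{x}(\lfloor i/k\rfloor)+\hat{x}(\lfloor (n-i)/k\rfloor)\bigr]$, the goal becomes the single identity $\min\{a\in\mathrm{Im}(x):a\geq L\}=\hat{x}(m)$.

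The heart of the proof is a two-sided control of $L$. For the upper bound I would combine the superadditivity and monotonicity of $\hat{x}$ with the elementary floor inequality $\lfloor i/k\rfloor+\lfloor (n-i)/k\rfloor\leq\lfloor n/k\rfloor=m$, obtaining for each admissible $i$
\[
\hat{x}(\lfloor i/k\rfloor)+\hat{x}(\lfloor (n-i)/k\rfloor)\leq \hat{x}\bigl(\lfloor i/k\rfloor+\lfloor (n-i)/k\rfloor\bigr)\leq \hat{x}(m),
\]
hence $L\leq\hat{x}(m)$. For the matching lower information I would specialize to $i=jk$, which lies in $\{1,\dots,n-1\}$ because $1\leq jk\leq (m-1)k\leq n-k\leq n-1$, and which gives $\lfloor i/k\rfloor=j$ and $\lfloor (n-i)/k\rfloor=m-j$; thus $\hat{x}(j)+\hat{x}(m-j)\leq L$ for every $j=1,\dots,m-1$.

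To finish I would invoke the minimality in Definition \ref{defi of minsuper sequence}: $\hat{x}(m)$ is the least element of $\mathrm{Im}(x)$ dominating all the sums $\hat{x}(j)+\hat{x}(m-j)$, $1\leq j\leq m-1$. Since $L\leq\hat{x}(m)\in\mathrm{Im}(x)$, the value $\hat{x}(m)$ competes in the minimum defining $x(n,k)$, so $x(n,k)\leq\hat{x}(m)$; conversely $x(n,k)\in\mathrm{Im}(x)$ dominates every $\hat{x}(j)+\hat{x}(m-j)$ because $x(n,k)\geq L$, so minimality forces $x(n,k)\geq\hat{x}(m)$. Together these give $x(n,k)=\hat{x}(m)$ and close the induction. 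I expect the main obstacle to be the lower half of the argument: recognizing that the single arithmetic progression $i=jk$ already reproduces exactly the defining constraints of $\hat{x}(m)$, while superadditivity caps $L$ at $\hat{x}(m)$ from above—the bookkeeping that $i=jk$ remains in range and that $\lfloor (n-jk)/k\rfloor=m-j$ being the only routine points.
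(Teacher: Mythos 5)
Your proposal is correct and follows essentially the same route as the paper's proof: induction on $n$ uniformly in $k$, with superadditivity of $\hat{x}$ plus the floor inequality $\lfloor i/k\rfloor+\lfloor (n-i)/k\rfloor\leq\lfloor n/k\rfloor$ giving $x(n,k)\leq\hat{x}(\lfloor n/k\rfloor)$, and the specialization $i=jk$ reproducing the defining constraints of $\hat{x}(\lfloor n/k\rfloor)$ to give the reverse inequality. Your write-up is, if anything, slightly more careful than the paper's about the boundary cases $k\geq n$ and the verification that $i=jk$ stays in range.
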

	\begin{proof}
		For fixed $k$, we prove by induction on $n$ that $\hat{x}(\lfloor n/k \rfloor)=x(n,k)$.
		
		When $n=1$, $\hat{x}(\lfloor 1/k \rfloor)=x(1,k)$, since both of them are $min(\text{Im}(x))$ or zero depending on $k=1$ or $k>1$, respectively.
		
		Assume for all $k$ and $1\leq m \leq n-1$, $\hat{x}(\lfloor m/k \rfloor)=x(m,k)$. Note $(\hat{x}(n))_{n\geq 1}$ is increasing, so $\hat{x}(n_1)\leq \hat{x}(n_2)$ for $n_1\leq n_2$. By superadditive property of $\hat{x}$, we have 
		\[\hat{x}(\lfloor i/k \rfloor) + \hat{x}(\lfloor (n-i)/k \rfloor) \leq \hat{x}(\lfloor i/k \rfloor + \lfloor (n-i)/k \rfloor) \leq \hat{x}(\lfloor n/k \rfloor), 1\leq i \leq n-1.  
		\] 
		Here we have utilized the fact that $\lfloor x \rfloor + \lfloor y \rfloor \leq \lfloor x+y \rfloor$.
		By induction hypothesis, $\hat{x}(\lfloor i/k \rfloor)=x(i,k)$ and $\hat{x}(\lfloor (n-i)/k \rfloor)=x(n-i,k)$ for all $1\leq i \leq n-1$. So we have
		\[x(i,k)+x(n-i,k) \leq \hat{x}(\lfloor n/k \rfloor), 1\leq i \leq n-1.
		\]
		By definition, $x(n,k)$ is the smallest $a \in Im(x) $ such that $x(i,k)+x(n-i,k) \leq a$, we conclude $x(n,k)\leq \hat{x}(\lfloor n/k \rfloor)$.
		
		To prove the inverse inequality, by definition \ref{defi of minsuper sequence}, 
		\[\hat{x}(\lfloor n/k \rfloor)=\min_{a\in Im(x)}\{\hat{x}(m_1)+\hat{x}(\lfloor n/k \rfloor -m_1) \leq a, 1\leq m_1 \leq \lfloor n/k \rfloor -1\}.
		\]
		For every $m_1$ such that $1\leq m_1 \leq \lfloor n/k \rfloor -1$, we choose an integer $i$ such  that $m_1=\lfloor i/k \rfloor $, specifically, let $i=m_1k$, by induction hypothesis:
		\[\hat{x}(m_1)+\hat{x}(\lfloor n/k \rfloor -m_1)=\hat{x}(\lfloor i/k\rfloor)+\hat{x}(\lfloor (n-i)/k \rfloor)=x(i,k)+x(n-i,k)\leq x(n,k).
		\]  
		Since the above inequality holds for every $m_1$, we conclude $\hat{x}(\lfloor n/k \rfloor)\leq x(n,k)$. 
		
		Therefore, $\hat{x}(\lfloor n/k \rfloor) = x(n,k)$, completing the induction.
	\end{proof}
	So far based on two sequences $x,y$, we have defined two kinds of generalized factorials, as stated in Theorem \ref{the general definition of abstract factorial} and Theorem \ref{the simpler definition of abstract factorial}, respectively. In the remainder of this paper, whenever we refer to factorials associated with $(x,y)$, we specifically mean the construction in theorem \ref{the simpler definition of abstract factorial}. 
	
\subsection{Examples and applications}
	The following two examples can be found in \cite{mingarelli2013abstract}, where A.B.Mingarelli has provided a more detailed verification.
	\begin{example}
		Let $x(n)=n$ and $y(n)=e^{\Lambda(n)}$, the exponential of Von Mangoldt function, in this case the $n$-th factorial reduces to the ordinary factorial $n!$, 
		\begin{equation}\label{ordinary factorial representation}
			n!=\prod_{k=1}^{n} e^{\Lambda(k)\lfloor n/k \rfloor}.
		\end{equation}
		We remark that above formula is alternative form of well known de Polignac's formula:
		\[\text{Ord}_{p}(n!)=\sum_{i=1}^{\infty}\left\lfloor \frac{n}{p^i} \right\rfloor,
		\]
		where $p $ is prime and $\text{Ord}_{p}(m)$ denotes the highest power $d$ such that $p^d$ divides $m$.
	\end{example}
	\begin{example}
		Let $x(n)=n$, $\mathbb{P}$ be the set of all prime numbers, and $y(n)$ defined as follows:
		\[y(n)=\begin{cases}1 & \text{ if } n\neq p^m(p-1) \text{for any prime } p \text{ and } m\geq 0.\\
			\prod_{p\in \mathbb{P }, n=p^m(p-1)}p & \text{ if } n=p^m(p-1) \text{ for some prime } p \text{ and } m\geq 0.
		\end{cases}
		\]
		Then $(x,y)$ generate Bhargava's factorial \cite{Bhargava2000factorial}:
		\[n!_{x,y}=\prod_{p\in \mathbb{P}} p^{\sum_{m=0}^{\infty}\left \lfloor \frac{n}{p^m(p-1)}\right \rfloor}.
		\]
		It should be noted that $n!_{x,y}$ written above is actually the factorial of $n+1$ according to Bhargava's construction.
	\end{example}
	\begin{example}
		Let $x(n)=n^2$ and $y(n)=e^{\Lambda(n)}$, we have
		\[n!_{x,y}=\prod_{k=1}^{n}e^{\Lambda(k)\left\lfloor n/k\right\rfloor^2}=\prod_{p\in \mathbb{P}}p^{\sum_{i\geq 1}\left\lfloor \frac{n}{p^i}\right\rfloor^2}
		\]
		This factorial has a simple and straightforward interpretation:
		\begin{equation}\label{x(n)=n^2 factorial}\prod_{p\in \mathbb{P}}p^{\sum_{i\geq 1}\left\lfloor \frac{n}{p^i}\right\rfloor^2}=
		\prod_{j=1}^{n}\prod_{k=1}^{n} \text{g.c.d}(j,k).
		\end{equation}
		Where $\text{g.c.d}(j,k)$ is the greatest common divisor of $j,k$. 
		
		 Formula \eqref{x(n)=n^2 factorial} was conjectured in \cite{oeis} (A092287), and confirmed by 
		 C.R. Greenhouse within the same sequence. Here we provide a short proof of this fact.
		Consider a prime $p$, $j\leq n$ and $k\leq n$, we can determine when $p$ is a factor of $\text{g.c.d}(j,k)$. Specifically, $p$ divides $\text{g.c.d}(j,k)$ if and only if $j$ belongs to $\{p,2p,3p,...,\left\lfloor n/p\right\rfloor p\} $ and $k\in \{p,2p,3p,...,\left\lfloor n/p\right\rfloor p\}$, the total number of such pairs is $\left\lfloor n/p\right\rfloor^2$. Next consider when $p^2$ divides $\text{g.c.d}(j,k)$. In this case we require $j\in \{p^2,2p^2,3p^2,...,\left\lfloor n/p^2\right\rfloor p^2\}$ and $k\in \{p^2,2p^2,3p^2,...,\left\lfloor n/p^2\right\rfloor p^2\}$, resulting in a total of $\left\lfloor n/p^2\right\rfloor^2$ pairs. Continuing this reasoning, we find that the total number of $p$ divides the product on the right of \eqref{x(n)=n^2 factorial}is 
		\[\text{Ord}_p\left(\prod_{j=1}^{n}\prod_{k=1}^{n} \text{g.c.d}(j,k)\right)=\sum_{i=1}^{\infty}\left\lfloor \frac{n}{p^i}\right\rfloor^2.
		\]  
		which aligns with our interpretation.
		
		It is straightforward to extend \eqref{x(n)=n^2 factorial} to a general case, for each positive integer $q\geq 1$, let $x(n)=n^q$, $y(n)=e^{\Lambda(n)}$, the factorial $n!_{x,y}$ has the following interpretation:
		\[n!_{x,y}=\prod_{p\in \mathbb{P}}p^{\sum_{i\geq 1}\left\lfloor \frac{n}{p^i}\right\rfloor^q}=
		\prod_{1\leq j_1\leq n}\prod_{1\leq j_2\leq n}\ldots\prod_{1\leq j_q \leq n} \text{g.c.d}(j_1,j_2,\ldots,j_q).
		\]
	\end{example}
	\begin{proof}
		The proof of this fact is essentially same with the $q=2$ case, mentioned above.
	\end{proof}
	\begin{definition}
		For a generalized factorial $(n!_E)_{n\geq 0}$, the associated Euler constant is defined by \cite{mingarelli2013abstract}: 
		\[e_E:=\sum_{n=0}^{\infty}\frac{1}{n!_E}.
		\]
	\end{definition}
	Mingarelli \cite{mingarelli2013abstract} has proved the following:
	\begin{theorem}
		Let $(n!_E)_{n\geq 0}$ is a generalized factorial, and $e_E$ is the Euler constant associated to $(n!_E)_{n\geq 0}$,
		then $e_E$ is irrational. In fact if $!_{E_1},!_{E_2},\cdots,!_{E_k}$ is any collection of factorial functions, for $s_1,s_2,\cdots,s_k\in \mathbb{N}$, not all zero, then
		\[\sum_{n=0}^{\infty}\frac{1}{\prod_{j=1}^{k}n!_{E_j}^{s_j}}, \quad \sum_{n=0}^{\infty}\frac{(-1)^n}{\prod_{j=1}^{k}n!_{E_j}^{s_j}}
		\] 
		are each irrational. 
	\end{theorem}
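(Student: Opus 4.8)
The plan is to reduce the general ``in fact'' statement to the case of a single factorial and then run the classical argument for the irrationality of $e$, adapted to the divisibility structure supplied by the axioms. \emph{Step 1 (reduction to one factorial).} First I would show that the sequence $a_n:=\prod_{j=1}^{k}(n!_{E_j})^{s_j}$ is itself a generalized factorial. Indeed $a_0=\prod_j(0!_{E_j})^{s_j}=1$; for $0\le r\le n$ one computes $a_n/(a_ra_{n-r})=\prod_{j=1}^{k}\bigl(\binom{n}{r}_{E_j}\bigr)^{s_j}\in\mathbb{Z}^+$, a product of positive integers; and since $n!\mid n!_{E_j}$ for every $j$ with at least one $s_j\ge 1$, we get $n!\mid a_n$. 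Hence it suffices to prove that for a \emph{single} generalized factorial $(A_n)_{n\ge 0}$ both $\sum_{n\ge 0}1/A_n$ and $\sum_{n\ge 0}(-1)^n/A_n$ are irrational; applying this to $A_n=a_n$ recovers the full statement.

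\emph{Step 2 (divisibility and tail estimates).} From $\binom{n}{r}_{A}:=A_n/(A_rA_{n-r})\in\mathbb{Z}^+$ I extract exactly the facts I need: $A_rA_{n-r}\mid A_n$ for $0\le r\le n$ (so $A_n\mid A_N$ whenever $n\le N$, and $A_{N+1}A_{m-1}\mid A_{N+m}$), together with $A_n\ge n!$, which guarantees convergence. Writing $c_n:=A_n/A_{n-1}\in\mathbb{Z}^+$ and $S:=\sum_{n\ge 0}1/A_n$, the inequality $A_{N+m}\ge A_{N+1}A_{m-1}$ yields the uniform tail bound
\[
\sum_{m\ge 1}\frac{A_N}{A_{N+m}}\le \frac{1}{c_{N+1}}\sum_{k\ge 0}\frac{1}{A_k}=\frac{S}{c_{N+1}}.
\]
Moreover $(c_n)$ is unbounded, for if $c_n\le M$ for all $n$ then $A_n\le M^n$, contradicting $A_n\ge n!$. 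Thus $c_{N+1}$ can be made arbitrarily large by a suitable choice of $N$.

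\emph{Step 3 (the plain series).} Suppose $S=p/q$. Multiplying by $qA_N$ and using $A_n\mid A_N$ for $n\le N$, the head $q\sum_{n\le N}A_N/A_n$ is an integer, so the weighted tail $q\sum_{n>N}A_N/A_n$ is a positive integer and hence $\ge 1$, i.e. $\sum_{n>N}A_N/A_n\ge 1/q$. Choosing $N$ with $c_{N+1}>qS$ makes the same tail $\le S/c_{N+1}<1/q$, a contradiction. This disposes of $e_E$ and the first sum.

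\emph{Step 4 (the alternating series and the main obstacle).} The same device shows that for every $N$ the quantity $q\sum_{n>N}(-1)^nA_N/A_n$ is an integer of absolute value at most $qS/c_{N+1}$; taking $c_{N+1}>qS$ forces this integer to be $0$, i.e. $\sum_{n>N}(-1)^n/A_n=0$. To finish I must contradict this by showing the alternating tail never vanishes. Grouping consecutive terms,
\[
\sum_{n>N}\frac{(-1)^n}{A_n}=(-1)^{N+1}\sum_{m\ge 0}\Bigl(\frac{1}{A_{N+1+2m}}-\frac{1}{A_{N+2+2m}}\Bigr),
\]
a signed sum of \emph{nonnegative} terms which is zero precisely when $c_{N+2+2m}=1$ for all $m$. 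Ruling out such ``plateaus'' is where the genuine work lies, and I expect it to be the main obstacle, since the bare axioms do permit $c_n=1$. The clean sufficient condition is that $(A_n)$ be strictly increasing, i.e. $c_n\ge 2$ for all $n$; this holds automatically once $A_1\ge 2$, because then $\binom{n}{1}_A\ge 1$ gives $c_n\ge A_1\ge 2$. Granting strict monotonicity, every grouped difference is positive, the alternating tail is nonzero, and the contradiction with $\sum_{n>N}(-1)^n/A_n=0$ completes the proof.
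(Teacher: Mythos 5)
The paper itself offers no proof of this theorem: it is quoted from Mingarelli \cite{mingarelli2013abstract} with a citation and no argument, so your attempt can only be judged on its own merits. Your Steps 1--3 are correct and complete. The reduction to a single factorial via $a_n=\prod_j (n!_{E_j})^{s_j}$ is exactly right, the divisibility facts $A_rA_{n-r}\mid A_n$, the tail bound $\sum_{m\ge1}A_N/A_{N+m}\le S/c_{N+1}$, and the unboundedness of $c_n=A_n/A_{n-1}$ (forced by $A_n\ge n!$) are all valid, and the integrality contradiction for $\sum 1/A_n$ is airtight. This is the standard argument and it fully disposes of $e_E$ and the first family of sums.

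Step 4, however, contains the gap you yourself flag, and it is not a removable technicality. Your escape hatch $A_1\ge2$ fails precisely in the cases this paper cares about: with $y=e^{\Lambda}$ one has $1!_{x,y}=y(1)^{x(1)}=1$, and the ordinary factorial has $1!=1$, so $c_n\ge A_1$ gives only $c_n\ge1$. Worse, the ``plateau'' scenario you worry about is actually realized by the paper's axioms. Set $A_0=A_1=1$, $A_2=2$, and for $k\ge1$ define
\[
A_{2k+1}=A_{2k+2}=\operatorname{lcm}\Bigl((2k+2)!,\ \{A_iA_j: i,j\ge1,\ i+j=2k+1\},\ \{A_iA_j: i,j\ge2,\ i+j=2k+2\}\Bigr),
\]
a well-founded recursion (all indices on the right are at most $2k$) giving $A_3=A_4=24$, $A_5=A_6=2880$, and so on. One checks directly that $A_0=1$, that $n!\mid A_n$, and that every $A_rA_{n-r}$ divides $A_n$ (the only split not built into the lcm is $A_1A_{2k+1}\mid A_{2k+2}$, which holds because $A_1=1$), so this is a generalized factorial in the sense of Definition 1.1; yet $\sum_{n\ge0}(-1)^n/A_n=1-1+\tfrac12+\sum_{k\ge1}\bigl(-1/A_{2k+1}+1/A_{2k+2}\bigr)=\tfrac12$. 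So the alternating half of the statement cannot be derived from the three axioms alone --- no completion of your Step 4 exists at this level of generality --- and either Mingarelli's original hypotheses are stronger than what is transcribed here, or an extra assumption (e.g.\ eventual strict monotonicity of $n!_E$, under which your grouping argument does close the proof) must be added before the alternating claim, and hence the corollary that follows it, can be asserted.
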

	Apply above theorem to our construction of generalized factorial, we have 
	\begin{corollary}
		For each $x_i=(x_i(n))_{n\geq 1}, 1\leq i \leq k$ is a superadditive sequence of positive integers, and $y(n)=e^{\Lambda(n)}$, then  for $s_1,s_2,\cdots,s_k\in \mathbb{N}$, not all zero,
		\[\sum_{n=0}^{\infty}\frac{1}{\prod_{j=1}^{k}n!_{x_j,y}^{s_j}}, \quad \sum_{n=0}^{\infty}\frac{(-1)^n}{\prod_{j=1}^{k}n!_{x_j,y}^{s_j}}
		\]
		are each irrational.
	\end{corollary}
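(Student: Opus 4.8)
The plan is to reduce the corollary to the immediately preceding theorem of Mingarelli by verifying that, under the stated hypotheses, each $!_{x_j,y}$ is a genuine generalized factorial. Once this is established, both irrationality assertions follow word for word from that theorem applied to the collection $!_{x_1,y},\ldots,!_{x_k,y}$ with the same exponents $s_1,\ldots,s_k$.

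First I would record the structural consequences of superadditivity. Since each $x_j$ is superadditive with $x_j(1)\geq 1$, an induction on $m$ using $x_j(m)+x_j(1)\leq x_j(m+1)$ gives $x_j(m)\geq m\,x_j(1)\geq m$ for all $m\geq 1$; in particular each $x_j$ is unbounded, so Definition \ref{defi of factorial set Bn} applies, and moreover $\hat{x_j}=x_j$ because a superadditive sequence is its own minimal superadditive subsequence.

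Next I would make the factorial explicit. By the closed-form expression $B_n=\prod_{k=1}^{n}y(k)^{\hat{x}(\lfloor n/k\rfloor)}$ established above, together with $\hat{x_j}=x_j$ and $y(k)=e^{\Lambda(k)}$, the factorial associated with $(x_j,y)$ satisfies, for each prime $p$,
\[
\text{Ord}_p\bigl(n!_{x_j,y}\bigr)=\sum_{p^i\leq n}x_j\!\left(\lfloor n/p^i\rfloor\right).
\]
Comparing this with de Polignac's formula $\text{Ord}_p(n!)=\sum_{p^i\leq n}\lfloor n/p^i\rfloor$ and invoking the bound $x_j(m)\geq m$ from the previous step, each summand on the right dominates the corresponding summand of the latter, so $\text{Ord}_p(n!_{x_j,y})\geq \text{Ord}_p(n!)$ for every $p$. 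Hence $n!\mid n!_{x_j,y}$ for all $n\geq 0$, and Theorem \ref{the simpler definition of abstract factorial} shows that each $(n!_{x_j,y})_{n\geq 0}$ is a generalized factorial.

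Finally, with all $k$ sequences verified to be generalized factorials, the conclusion is exactly Mingarelli's theorem specialized to $E_j=(x_j,y)$, giving the irrationality of both series in the statement for $s_1,\ldots,s_k\in\mathbb{N}$ not all zero. I expect no serious obstacle beyond the verification that $n!\mid n!_{x_j,y}$; the one point that genuinely requires care is the inequality $x_j(m)\geq m$, which is precisely where superadditivity (rather than mere monotonicity) is used — without it the divisibility $n!\mid n!_{x_j,y}$ can fail, and the hypotheses of the preceding theorem would no longer be met.
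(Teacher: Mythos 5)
Your proposal is correct and follows the same route as the paper: the paper's own proof is a one-line remark that each $!_{x_i,y}$ is a generalized factorial because superadditivity forces $x_i(n)\geq n$, which is exactly the reduction you carry out. You simply supply the details (the induction giving $x_j(m)\geq m\,x_j(1)$, the identity $\hat{x_j}=x_j$, and the prime-by-prime comparison with de Polignac's formula to get $n!\mid n!_{x_j,y}$) that the paper leaves implicit.
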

	\begin{proof}
		one just need to verify each $!_{x_i,y},1\leq i \leq k$ is indeed factorial, this is due to $x_i(n)\geq n$, since we assume each $x_i$ is superadditive.
	\end{proof}
	
\section{The inverse problem}
	Since we have shown that for any superadditive arithmetic function $(x(n))_{n\geq 1}$ and
	$(y(n))_{n\geq 1}$ of positive integers, there is a generalized factorial $(n!_{x,y})_{n\geq 0}$ associated with them. One may inquire about the inverse problem, given a generalized factorial $(n!_E)_{n\geq 0}$, does there exist a pair $(x,y)$ that could recover it in Theorem \ref{the simpler definition of abstract factorial}? Furthermore, if such a pair exists, is it uniquely determined ?
	
	For an arithmetic function (or sequence of positive integers) $\alpha$, denote difference function $\Delta\alpha$ as follows: 
	\[\Delta\alpha(n):=\alpha(n)-\alpha(n-1),\; n\geq 1.
	\]
	Where we set $\alpha(0)=0$.
	
	For arithmetic function $\beta$, denote logarithm function $\text{Log}(\beta)$ by 
	\[\text{Log}(\beta)(n):=\log(\beta(n)).
	\] 
	We present the following theorem that characterize the uniqueness of the factorial:
	\begin{theorem}\label{characterization theorem}
		Two pair of arithmetic functions $(\alpha,\beta)$ and $(x,y)$ generate same set of factorials in Theorem \ref{the simpler definition of abstract factorial} if and only if the Dirichlet convolution satisfies
		\begin{equation}\label{convolution characterization}
			\Delta\alpha \star Log(\beta)=\Delta x \star Log(y). 
		\end{equation}
	\end{theorem}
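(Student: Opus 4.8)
The plan is to pass to logarithms and recognize $\log B_n$ as the summatory function of a Dirichlet convolution. Since each $B_n$ is a positive integer, the two pairs generate the same factorials precisely when $\log B_n^{(\alpha,\beta)} = \log B_n^{(x,y)}$ for every $n\geq 1$ (the case $n=0$ being trivial). Because we work in the setting where $x$ and $\alpha$ are superadditive, we have $\hat{x}=x$ and $\hat{\alpha}=\alpha$, so the formula $x(n,k)=\hat{x}(\lfloor n/k\rfloor)$ established above gives directly
\[
\log B_n=\sum_{k=1}^{n}x(\lfloor n/k\rfloor)\,\mathrm{Log}(y)(k).
\]

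The heart of the argument is to rewrite this as a partial sum of $\Delta x \star \mathrm{Log}(y)$. First I would telescope $x(\lfloor n/k\rfloor)=\sum_{j=1}^{\lfloor n/k\rfloor}\Delta x(j)$, using the convention $x(0)=0$. The key combinatorial observation is that for positive integers $j,k$ one has $j\leq \lfloor n/k\rfloor$ if and only if $jk\leq n$; this lets me collapse the double sum and regroup the terms according to the product $m=jk$:
\[
\log B_n=\sum_{k=1}^{n}\mathrm{Log}(y)(k)\sum_{j\,:\,jk\leq n}\Delta x(j)=\sum_{m=1}^{n}\ \sum_{jk=m}\Delta x(j)\,\mathrm{Log}(y)(k)=\sum_{m=1}^{n}\bigl(\Delta x \star \mathrm{Log}(y)\bigr)(m).
\]
Thus $\log B_n$ is exactly the summatory function of $g:=\Delta x \star \mathrm{Log}(y)$. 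The step I expect to require the most care is this index juggling: justifying the equivalence $j\leq\lfloor n/k\rfloor\iff jk\leq n$ for integers and confirming that regrouping by $m=jk$ reproduces the Dirichlet convolution in the correct orientation, $\bigl(\Delta x\star\mathrm{Log}(y)\bigr)(m)=\sum_{d\mid m}\Delta x(m/d)\,\mathrm{Log}(y)(d)$.

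With this identity in hand the conclusion is immediate. A sequence of the form $S(n)=\sum_{m=1}^{n}g(m)$ determines and is determined by its increments, since $g(n)=S(n)-S(n-1)$ and $S(0)=0$. Applying this to both pairs, $\log B_n^{(\alpha,\beta)}=\log B_n^{(x,y)}$ holds for all $n$ if and only if the increments agree, that is, $\bigl(\Delta\alpha\star\mathrm{Log}(\beta)\bigr)(n)=\bigl(\Delta x\star\mathrm{Log}(y)\bigr)(n)$ for all $n\geq 1$. This is precisely the convolution identity \eqref{convolution characterization}, and since the passage through summatory functions is a genuine equivalence, both directions of the theorem follow simultaneously.
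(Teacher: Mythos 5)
Your proof is correct and is essentially the paper's argument transposed into additive (logarithmic) notation: your telescoping of $x(\lfloor n/k\rfloor)$ and regrouping of the double sum by $m=jk$ is exactly the paper's interchange of the double product $\prod_{k\leq n}\prod_{d\mid k}$ and the telescoping of the exponents $\alpha(k/d)-\alpha(k/d-1)$, and your observation that a summatory function and its increments determine each other is exactly the paper's step $A_n/A_{n-1}=B_n/B_{n-1}$. Your explicit remark that one needs $\hat{x}=x$ (superadditivity) to write $x(n,k)=x(\lfloor n/k\rfloor)$ is a point the paper leaves implicit.
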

	\begin{proof}
		First suppose $\Delta \alpha\star Log(\beta)=\Delta x \star Log(y)$. By definition of convolution, for every $k\geq 1$:
		\begin{equation}
			\sum_{d | k} log(\beta(d))\left(\alpha\left(\frac{k}{d}\right)-\alpha\left(\frac{k}{d}-1\right)\right)=\sum_{d | k} log(y(d))\left(x\left(\frac{k}{d}\right)-x\left(\frac{k}{d}-1\right)\right).
		\end{equation}
		Take exponential of both sides, we obtain
		\begin{equation}\label{exponenial of convolution}
			\prod_{d|k}\beta(d)^{\alpha(\frac{k}{d})-\alpha(\frac{k}{d}-1)}=\prod_{d|k}y(d)^{x(\frac{k}{d})-x(\frac{k}{d}-1)}, \; k\geq 1.
		\end{equation}
		multiply the above equation for $k=1,2,\ldots,n$, we have
		\begin{equation}\label{eqn An=Bn form}
			\prod_{k=1}^{n}\prod_{d|k}\beta(d)^{\alpha(\frac{k}{d})-\alpha(\frac{k}{d}-1)}=
			\prod_{k=1}^{n}\prod_{d|k}y(d)^{x(\frac{k}{d})-x(\frac{k}{d}-1)},\; n\geq 1.
		\end{equation}
		On the left side, interchange the order of product to obtain 
		\begin{equation}\prod_{k=1}^{n}\prod_{d|k}\beta(d)^{\alpha(\frac{k}{d})-\alpha(\frac{k}{d}-1)}
			=\prod_{d=1}^{n}\prod_{\substack{k\leq n \\ d|k}}\beta(d)^{\alpha(\frac{k}{d})-\alpha(\frac{k}{d}-1)}, \; n\geq 1.
		\end{equation}
		For $k\leq n$, $d$ divides $k$ if and only if $k\in \{d,2d,\ldots, \left\lfloor n/d\right\rfloor d\}$, hence
		\begin{align}
		 \prod_{d=1}^{n}\prod_{\substack{k\leq n \\ d|k}}\beta(d)^{\alpha(\frac{k}{d})-\alpha(\frac{k}{d}-1)} 
		 &= \prod_{d=1}^{n} \beta(d)^{\alpha(\frac{d}{d})-\alpha(\frac{d}{d}-1)+\ldots+\alpha\left(\frac{\left\lfloor n/d \right\rfloor d}{d}\right)-\alpha\left(\frac{\left\lfloor n/d \right\rfloor d}{d}-1\right)}\\
		     &= \prod_{d=1}^{n}\beta(d)^{\alpha(\left\lfloor n/d \right\rfloor)}\\
		     &= n!_{\alpha,\beta}.\label{final form of !(a,b)=!(x,y)}
		\end{align}
		Therefore by \eqref{eqn An=Bn form} $(\alpha,\beta)$ and $(x,y)$ generate same set of factorials. 
		
		Next, suppose $(\alpha,\beta)$ and $(x,y)$ generate same set of factorials $n!_E$, by definition 
		\[\prod_{k=1}^{n}\beta(k)^{\alpha\left(\left\lfloor \frac{n}{k} \right \rfloor\right)}=n!_E=\prod_{k=1}^{n}y(k)^{x\left(\left\lfloor \frac{n}{k} \right \rfloor\right)}
		\]
		We can reverse the argument from \eqref{final form of !(a,b)=!(x,y)} to get \eqref{eqn An=Bn form}, note \eqref{eqn An=Bn form} has the form $A_n=B_n$, from 
		\begin{equation}
			\frac{A_n}{A_{n-1}}=\frac{n!_E}{(n-1)!_E}=\frac{B_n}{B_{n-1}}.
		\end{equation}
		We get \eqref{exponenial of convolution}, which is equivalent to \eqref{convolution characterization}.
	\end{proof}
	\begin{corollary}\label{corollary alpha=id}
		For Two pair of arithmetic functions $(Id,\beta)$ and $(x,y)$, they generate same set of factorials if and only if the Dirichlet convolution satisfies
		\begin{equation}\label{special convolution characterization}
			1 \star Log(\beta)=\Delta x \star Log(y). 
		\end{equation}
	\end{corollary}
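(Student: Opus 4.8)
The plan is to derive this corollary as an immediate specialization of Theorem \ref{characterization theorem} to the case $\alpha = Id$. The heart of the matter is computing the difference function $\Delta Id$. Since $Id(n) = n$, and we adopt the convention $Id(0) = 0$, we have for every $n \geq 1$
\[
\Delta Id(n) = Id(n) - Id(n-1) = n - (n-1) = 1 = 1(n),
\]
so $\Delta Id$ is precisely the constant arithmetic function $1$ whose Dirichlet inverse is the M\"obius function.

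First I would invoke Theorem \ref{characterization theorem} with the specific choice $\alpha = Id$ and $\beta$ arbitrary. By that theorem, the pairs $(Id, \beta)$ and $(x, y)$ generate the same set of factorials if and only if
\[
\Delta Id \star Log(\beta) = \Delta x \star Log(y).
\]
Substituting the computation $\Delta Id = 1$ from the previous step into the left-hand side yields
\[
1 \star Log(\beta) = \Delta x \star Log(y),
\]
which is exactly equation \eqref{special convolution characterization}. This establishes both directions simultaneously, since the characterization in Theorem \ref{characterization theorem} is itself an ``if and only if.''

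There is no substantive obstacle here; the only point requiring care is the boundary convention. One must check that the value $Id(0) = 0$ used to define $\Delta Id(1) = Id(1) - Id(0) = 1$ is consistent with the convention $\alpha(0) = 0$ fixed just before Theorem \ref{characterization theorem}, so that the specialization is legitimate at $n = 1$. Once this consistency is noted, the corollary follows directly, and no further computation is needed.
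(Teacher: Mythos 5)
Your proof is correct and is essentially identical to the paper's own argument: both specialize Theorem \ref{characterization theorem} to $\alpha = Id$ and observe that $\Delta Id(n) = n-(n-1) = 1(n)$. Your extra remark about the convention $\alpha(0)=0$ at $n=1$ is a sensible check but does not change the approach.
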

	 
	\begin{proof}
		Since $Id(n)=n$, $\Delta Id(n)=Id(n)-Id(n-1)=1=1(n)$.
		\end{proof}
	\begin{corollary}
		If $n!_E$ is generated by $(\alpha,\beta)$, then
		\begin{equation}
			\frac{n!_E}{(n-1)!_E}=\prod_{d|n}\beta(d)^{\alpha(\frac{n}{d})-\alpha(\frac{n}{d}-1)},n\geq1.
		\end{equation}
	\end{corollary}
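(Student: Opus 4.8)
The plan is to read off the ratio directly from the explicit product formula for the factorial established in the proof of Theorem \ref{characterization theorem}, namely
\[
n!_E=\prod_{k=1}^{n}\beta(k)^{\alpha(\lfloor n/k\rfloor)}
\]
(this is exactly equation \eqref{final form of !(a,b)=!(x,y)}, with $\alpha$ in place of the superadditive $\hat x$). First I would form the quotient, isolating the factor $k=n$ (which has no counterpart in $(n-1)!_E$) and combining the remaining factors for $1\le k\le n-1$:
\[
\frac{n!_E}{(n-1)!_E}=\beta(n)^{\alpha(1)}\prod_{k=1}^{n-1}\beta(k)^{\alpha(\lfloor n/k\rfloor)-\alpha(\lfloor (n-1)/k\rfloor)}.
\]

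The crucial observation, and really the only content of the argument, is the elementary floor identity
\[
\left\lfloor\frac{n}{k}\right\rfloor-\left\lfloor\frac{n-1}{k}\right\rfloor=
\begin{cases}
1 & \text{if } k\mid n,\\
0 & \text{otherwise}.
\end{cases}
\]
Using this, every factor with $k\nmid n$ contributes exponent zero and drops out, while each factor with $k\mid n$ (so that $\lfloor n/k\rfloor=n/k$ and $\lfloor(n-1)/k\rfloor=n/k-1$) contributes $\beta(k)^{\alpha(n/k)-\alpha(n/k-1)}$. Reindexing the surviving factors by the divisors $d=k$ of $n$, and noting that the isolated term $\beta(n)^{\alpha(1)}=\beta(n)^{\alpha(1)-\alpha(0)}$ is precisely the $d=n$ divisor term thanks to the convention $\alpha(0)=0$, yields the claimed formula.

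Alternatively, and perhaps more in the spirit of the preceding theorem, one can bypass the floor manipulation entirely by invoking the intermediate identity exposed in the proof of Theorem \ref{characterization theorem}: there it was shown that $n!_E=\prod_{k=1}^{n}\prod_{d\mid k}\beta(d)^{\alpha(k/d)-\alpha(k/d-1)}$. The quotient $n!_E/(n-1)!_E$ then telescopes to the single outer factor indexed by $k=n$, which is exactly $\prod_{d\mid n}\beta(d)^{\alpha(n/d)-\alpha(n/d-1)}$.

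There is no genuine obstacle here: the statement is a one-line consequence of either the product formula or the telescoping structure already available. The only point requiring care is the boundary convention $\alpha(0)=0$, which guarantees that the $d=n$ term reduces correctly to $\beta(n)^{\alpha(1)}$ and that the formula remains valid at $n=1$, where $0!_E=1$ and $1!_E=\beta(1)^{\alpha(1)}$.
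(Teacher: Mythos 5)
Your proposal is correct, and your second (``alternative'') route is precisely the paper's own proof: the author simply declares the corollary to be a restatement of \eqref{exponenial of convolution} and \eqref{eqn An=Bn form}, i.e.\ the quotient $n!_E/(n-1)!_E$ telescopes to the single outer factor $k=n$ of the double product $\prod_{k=1}^{n}\prod_{d\mid k}\beta(d)^{\alpha(k/d)-\alpha(k/d-1)}$. Your first route --- working directly from $n!_E=\prod_{k=1}^{n}\beta(k)^{\alpha(\lfloor n/k\rfloor)}$ via the identity $\lfloor n/k\rfloor-\lfloor (n-1)/k\rfloor=1$ or $0$ according as $k\mid n$ or not --- is a genuinely different and slightly more self-contained argument: it proves the corollary from the definition of the factorial alone, without routing through the convolution machinery of Theorem~\ref{characterization theorem}, and it makes explicit why the convention $\alpha(0)=0$ is needed for the divisor term $d=n$. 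What the paper's (and your second) route buys is brevity, since \eqref{eqn An=Bn form} has already been established; what your first route buys is independence from that derivation and a transparent check of the boundary case $n=1$. Both are sound; no gaps.
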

	\begin{proof}
		This is simply a restatement of equations \eqref{exponenial of convolution} and \eqref{eqn An=Bn form}.
	\end{proof}
	Whenever $(\alpha,\beta)$ and $(x,y)$ generate the same factorial sequence, and we know three of these functions, we can use theorem (\ref{characterization theorem}) to determine the fourth one. If a factorial $!_E$ could be represented by $(\alpha=Id,\beta)$ in theorem \ref{the simpler definition of abstract factorial}, we refer to $(Id,\beta)$ as a "standard" representation of $!_E$.
	\begin{example}
		Let $x(n)=n(n+1)/2$, $y(n)=e^{\Lambda(n)}$, suppose we want to find $\beta$ such that $n!_{x,y}$ has the "standard" representation:
		\[n!_{x,y}=\prod_{k=1}^{n} \beta(k)^{\left\lfloor n/k \right\rfloor}.
		\]
		By corollary \ref{corollary alpha=id}, we have 
		\[1\star Log(\beta) = Id \star \Lambda
		\]
		Since $\Delta x=Id$ appears on the right side of above equation. Now the Dirichlet inverse of $1$ is the M\"{o}bius function $\mu$, which allow us to solve for $\beta$ :
		\[\beta(n)=\prod_{d|n} e^{\Lambda(d)(\mu\star Id)(\frac{n}{d})} ,n\geq 1.
		\]  
		Notice $\mu\star Id$ is simply Euler's totient function $\phi $, we conclude that $\beta(n)$ is always a positive integer, hence the factorial $n!_{x,y}$ also generated by $\alpha=Id$ and $\beta=e^{\Lambda\star \phi}$.
	\end{example}
	For each positive integer $k$, Jordan's totient function \cite{sandor2006handbook} $J_k$ is defined as 
	\[J_k(n)=n^k\prod_{p|n}\left(1-\frac{1}{p^k}\right),
	\] 
	where $p$ ranges over all the prime divisors of $n$.
	It can be verified that $J_1$ reduce to the Euler's totient function, further we have $\mu(n) \star n^k =J_k(n)$.
	\begin{example}
		Let $y(n)=e^{\Lambda(n)}$, and $x(n)=\sum_{i=1}^{n}i^k$, the sum of $k$'s power of first $n$ positive integers.
		If we want to express $n!_{x,y}$ in standard form, then again by \ref{corollary alpha=id}, function $\beta$,  is given by 
		\[\beta=e^{\Lambda \star \Delta x \star \mu}=e^{\Lambda \star J_k}.
		\]  
	\end{example}
	Next we show that not every generalized factorial allows a standard representation. by Corollary \ref{corollary alpha=id}, we compute the first few elements of $\beta$:
	\begin{align}
		\beta(1) & = y(1)^{x(1)};\label{beta 1 value}\\ 
		\beta(2) & = y(1)^{-2x(1)+x(2)}y(2)^{x(1)};\label{beta 2 value}\\
		\beta(3) & = y(1)^{-x(1)-x(2)+x(3)} y(3)^{x(1)};\label{beta 3 value}\\
		\beta(4) & = y(1)^{x(1)-x(2)-x(3)+x(4)}y(2)^{-2x(1)+x(2)}y(4)^{x(1)}\label{beta 4 value}.
	\end{align}
	The issue arises when there exists some superadditive functions $x$ such that $x(1)-x(2)-x(3)+x(4)<0$. For example we can define $x(1)=1, x(2)=3, x(3)=5, x(4)=6$, while choosing $y(1)=4, y(2)=2, y(3)=12, y(4)=1$. One can verify function $x$ is superadditive for these first four values, and $n! | n!_{x,y}, 1\leq n\leq 4$. This would lead to $\beta(4)=1/2$, which is not an integer. 
	
	This paper has demonstrated the method of combining two arithmetic functions to produce a generalized factorial. We have presented several examples beyond the conventional factorial, including a noteworthy result on the irrationality of Euler's number. Furthermore, we have shown that the logarithm of these generalized factorials can be expressed as the Dirichlet convolution of two arithmetic functions.
	
	Our findings open up numerous avenues for future research. Some areas for exploration include investigating alternative representations of the usual factorial, defining Stirling numbers associated with these generalized factorials, and developing a generalized version of binomial inversion within this context.  
	

\subsection*{Acknowledgment}
I would like to thank the anonymous referee for a careful reading of the paper.


\begin{thebibliography}{1}
\bibitem{Bhargava1998general} M. Bhargava, \textit{Generalized factorials and fixed divisors over subsets of Dedekind domains.} J. Number Theory, \textbf{72} (1998), 67--75. https://doi.org/10.1006/jnth.1998.2220.
\bibitem{Bhargava2000factorial} M. Bhargava, \textit{The Factorial Function and Generalizations.} Am. Math. Mon. \textbf{107} (2000), 783--799. https://doi.org/10.2307/2695734.
\bibitem{chabert2006oldproblems} J.L. Chabert, P.J. Cahen, \textit{Old problems and new questions around integer-valued polynomials and factorial sequences.} In: J.W. Brewer, S. Glaz, W.J. Heinzer, B.M. Olberding, (eds) Multiplicative Ideal Theory in Commutative Algebra, Springer, Boston, MA, 2006. https://doi.org/10.1007/978-0-387-36717-0\_6
\bibitem{crabbe2001general} A.M. Crabbe, \textit{Generalized Factorial Functions and Binomial Coefficients.} Undergraduate Honors Thesis, Trinity University, USA, 2001, 35pp.
\bibitem{knuth1989power} D.E. Knuth, H.S. Wilf, \textit{The power of a prime that divides a generalized binomial coefficients.} Journal f\"{u}r die reine und angewandte Mathematik, \textbf{396}(1989),212--219. 
\bibitem{mingarelli2013abstract} A.B. Mingarelli, \textit{Abstract Factorials.} Notes Number Theory Discret Math \textbf{19} (2013) 43--76. 
\bibitem{oeis} N.J.A. Sloane, (Ed.). The On-Line Encyclopedia of Integer Sequences.(1964) https://oeis.org/A092287
\bibitem{sandor2006handbook} J. S\'{a}ndor, D.S. Mitrinovi\'{c}, B. Crstici, \textit{Handbook of Number Theory I.} Springer, 2006. https://doi.org/10.1007/1-4020-3658-2.    
\end{thebibliography}
\end{document}